\theoremstyle{plain}
\newtheorem{theorem}{Theorem}[section]
\newtheorem{proposition}[theorem]{Proposition}
\newtheorem{lemma}[theorem]{Lemma}
\newtheorem{conjecture}[theorem]{Conjecture}
\newtheorem{question}[theorem]{Question}
\theoremstyle{remark}
\newtheorem{claim}{Claim}
\begin{document}

\title [Unperturbed bridge positions]
{Unperturbed weakly reducible non-minimal\\ bridge positions}

\author[J. H. Lee]{Jung Hoon Lee}
\address{Department of Mathematics and Institute of Pure and Applied Mathematics, Jeonbuk National University, Jeonju 54896, Korea}
\email{junghoon@jbnu.ac.kr}

\subjclass[2020]{57K10}

\keywords{unperturbed bridge position, weak reducibility, Gordon's Conjecture}

\begin{abstract}
A bridge position of a knot is said to be perturbed if there exists a cancelling pair of bridge disks.
Motivated by the examples of knots admitting unperturbed strongly irreducible non-minimal bridge positions due to Jang-Kobayashi-Ozawa-Takao,
we derive examples of unperturbed weakly reducible non-minimal bridge positions.
Also, a bridge version of Gordon's Conjecture is proposed:
the connected sum of unperturbed bridge positions is unperturbed.
\end{abstract}

\maketitle

\section{Introduction}\label{sec1}

Suppose that $S^3$ is decomposed into two $3$-balls by an embedded sphere $S$.
A knot $K$ is in {\em $n$-bridge position} with respect to $S$ if
$K$ intersects each of the $3$-balls in a collection of $n$ $\partial$-parallel arcs.
The original concept of bridge position, the bridge number, was first introduced by Schubert in 1954 \cite{Schubert}.
Thereafter it is generalized to the notion of bridge splitting for (a $3$-manifold, a link) pair.

For any $n$-bridge position, we can always give a perturbation to get a perturbed $(n+1)$-bridge position.
Conversely, from a perturbed bridge position we obtain a lower index bridge position.
A bridge position is {\em unperturbed} if it is not perturbed.
It is a fundamental problem to detect whether a given bridge position is unperturbed or not.
The unknot has a unique $1$-bridge position and every $n$-bridge position ($n > 1$) of the unknot is perturbed \cite{Otal1}.
Non-minimal bridge positions of $2$-bridge knots \cite{Otal2}, torus knots \cite{Ozawa} are perturbed.
Zupan showed that if $K$ is an mp-small knot and every non-minimal bridge position of $K$ is perturbed,
then every non-minimal bridge position of a $(p, q)$-cable of $K$ is also perturbed \cite{Zupan}.
Concerning $2$-cables, the author showed that if every non-minimal bridge position of a knot $K$ is perturbed,
then every non-minimal bridge position of a $(2, 2q)$-cable link of $K$ is perturbed,
without the assumption of mp-smallness of $K$ \cite{Lee}.

On the other hand, there exist knots admitting unperturbed non-minimal bridge positions \cite{JKOT}, \cite{Ozawa-Takao}.
All the examples in \cite{JKOT} and \cite{Ozawa-Takao} are strongly irreducible bridge positions.
Weakly reducible bridge positions are the opposites of more complicated strongly irreducible ones, so simpler.
For weakly reducible bridge positions, one can ask whether unperturbed non-minimal bridge positions can be attained.
We show that there exist unperturbed weakly reducible non-minimal bridge positions
by taking the connected sum operation on the knots due to Jang et al.

\begin{theorem}\label{thm1}
There exist unperturbed weakly reducible non-minimal bridge positions.
\end{theorem}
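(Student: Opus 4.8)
The plan is to realize such a bridge position as a connected sum of the strongly irreducible examples of Jang--Kobayashi--Ozawa--Takao \cite{JKOT}. Fix (possibly equal) knots $K_1,K_2$, each admitting an unperturbed strongly irreducible non-minimal bridge position $\mathcal K_i=(S^3,K_i,S_i)$ of bridge index $n_i$; each $K_i$ is necessarily nontrivial, since every non-minimal bridge position of the unknot is perturbed \cite{Otal1}, so $n_i>b(K_i)\ge 2$, where $b$ denotes the bridge number. Form the connected sum of bridge positions $\mathcal K=\mathcal K_1\#\mathcal K_2=(S^3,K,S)$ in the standard way; this is a bridge position of $K=K_1\#K_2$ of index $n_1+n_2-1$, and it comes with a decomposing $2$--sphere $P$ with $|P\cap K|=2$ whose intersection with the bridge sphere $S$ is a single circle $c$, dividing $S$ into a $(2n_1-1)$--punctured disk $S_1$ and a $(2n_2-1)$--punctured disk $S_2$. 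I will show that $\mathcal K$ is non-minimal, weakly reducible, and unperturbed; this proves Theorem~\ref{thm1}.

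Non-minimality is immediate from Schubert's additivity of bridge number \cite{Schubert}: $b(K)=b(K_1)+b(K_2)-1<n_1+n_2-1$. For weak reducibility, note that since $n_i\ge 2$ the upper tangle of $\mathcal K$ contains an arc $a$ lying in the ball on the $K_1$--side of $P$ with both endpoints in the interior of $S_1$; taking a bridge disk $\delta$ for $a$ in the upper ball and the frontier of a regular neighborhood of $\delta$, one obtains a compressing disk $E_+$ of $S\setminus K$ in the upper ball with $\partial E_+$ an essential curve contained in the interior of $S_1$. Symmetrically, a bridge disk for an arc of the lower tangle on the $K_2$--side gives a compressing disk $E_-$ in the lower ball with $\partial E_-$ essential and contained in the interior of $S_2$. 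Since $S_1\cap S_2=c$, the curves $\partial E_+$ and $\partial E_-$ are disjoint, so $(E_+,E_-)$ is a weak reduction of $\mathcal K$. (In fact $P$ itself is a reducing sphere for $\mathcal K$, but only weak reducibility is needed.)

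It remains to show that $\mathcal K$ is unperturbed; this is the instance of the proposed bridge analogue of Gordon's Conjecture relevant here, and the strong irreducibility of the factors $\mathcal K_i$ is essential to the argument. Suppose for contradiction that $\mathcal K$ has a cancelling pair $(D_+,D_-)$, with $D_+$ an upper and $D_-$ a lower bridge disk meeting in a single point $x$, a common endpoint of the two cancelled arcs. Among all cancelling pairs of $\mathcal K$ and all spheres meeting $K$ in two points and $S$ in a single circle and exhibiting $K$ as $K_1\#K_2$, choose one (still called $P$) minimizing $|P\cap(D_+\cup D_-)|$. A circle of $P\cap(D_+\cup D_-)$ bounds a subdisk of $D_+$ or of $D_-$ that is disjoint from $K$, hence bounds a disk in $S^3\setminus K$; since an essential simple closed curve on the twice--punctured sphere $P\setminus K$ is a meridian of $K$ and meridians bound no disk in $S^3\setminus K$, every such circle is inessential in $P\setminus K$, and a standard innermost--disk isotopy of $P$ removes all of them, so by minimality $P\cap(D_+\cup D_-)$ consists of arcs only. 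An outermost arc of $P$ in $D_+$ cuts off a subdisk $\Delta_+$ of $D_+$, and one in $D_-$ cuts off a subdisk $\Delta_-$; since $D_+\cap D_-=\{x\}$, the subdisks $\Delta_+$ and $\Delta_-$ can be taken disjoint. Were $\Delta_+$ and $\Delta_-$ essential, pushing them off $P$ would yield essential disks --- on opposite sides and with disjoint boundaries --- for the bridge surface of one of the tangles $(B_i,K\cap B_i)$ into which $P$ cuts $(S^3,K)$, contradicting the strong irreducibility that tangle splitting inherits from $\mathcal K_i$. So $\Delta_+$ and $\Delta_-$ are inessential and can be removed, reducing $|P\cap(D_+\cup D_-)|$; by minimality, $P\cap(D_+\cup D_-)=\emptyset$. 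Then $D_+\cup D_-$ lies in a single ball cut off by $P$, disjoint from the connected--sum region, and capping that ball off with a $3$--ball meeting $K$ in a boundary--parallel arc recovers $\mathcal K_i$ with $(D_+,D_-)$ surviving as a cancelling pair --- contradicting that $\mathcal K_i$ is unperturbed. Hence $\mathcal K$ is unperturbed.

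The step I expect to be the main obstacle is the last one: controlling how a hypothetical cancelling pair of $\mathcal K$ can meet the decomposing sphere $P$. For arbitrary unperturbed factors this is precisely the open bridge analogue of Gordon's Conjecture; what makes the argument close here is the strong irreducibility of the Jang--Kobayashi--Ozawa--Takao factors (together with whatever structural features of those knots are needed to run the isotopies cleanly), which rules out the ``stuck'' configurations of the outermost--arc reduction. I also anticipate that some care is required to keep $P\cap S$ a single circle throughout the simplifications of $P$, and in the bookkeeping that identifies the subdisks $\Delta_\pm$ with genuine essential disks of the tangle summands and that relates a cancelling pair of a tangle summand to one of $\mathcal K_i$.
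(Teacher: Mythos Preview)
Your setup, the non-minimality, and the weak reducibility are fine and match the paper. The divergence is in the unperturbedness step, and there the argument has a genuine gap. The paper does \emph{not} attempt a direct innermost/outermost reduction of a hypothetical cancelling pair against the decomposing sphere $P$; instead it passes to the $2$-fold branched cover. Each factor $\mathcal K_i$ lifts to a Heegaard splitting $V_i\cup_{F_i}W_i$, which in the Jang--Kobayashi--Ozawa--Takao examples is strongly irreducible, hence unstabilized. The connected sum of bridge positions lifts to the connected sum of these Heegaard splittings, and the (proved) Gordon Conjecture for Heegaard splittings says that sum is unstabilized; pushing down, the bridge connected sum is unperturbed. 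So the heavy lifting is outsourced to Bachman and Qiu--Scharlemann.

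Your direct argument, as written, does not close. The key problems are: (i) the outermost subdisks $\Delta_+\subset D_+$ and $\Delta_-\subset D_-$ need not lie on the \emph{same} side of $P$, so there is no reason they land in a single summand and hence no contradiction with strong irreducibility of any one $\mathcal K_i$; (ii) even when they do, $\Delta_\pm$ are subdisks of \emph{bridge} disks, and the outermost arc $\beta\subset\partial D_\pm$ may run along the bridge in $K$, so ``pushing off $P$'' does not in general produce compressing disks disjoint from $K$ with essential boundary in $S_i-K_i$; (iii) the step ``$\Delta_\pm$ inessential $\Rightarrow$ removable'' is not justified---the isotopy that removes the arc must preserve simultaneously the cancelling-pair condition $D_+\cap D_-=\{x\}$ and the normalizations $|P\cap K|=2$, $P\cap S=c$, and it is exactly here that the configurations of Figure~3 in the paper (two subarcs meeting in two points) obstruct the usual cut-and-paste. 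You correctly identify this step as the obstacle; the point is that strong irreducibility of the factors does not visibly resolve it, and what you are attempting is a special case of the bridge-version of Gordon's Conjecture that the paper explicitly leaves open. The paper's branched-cover route sidesteps all of this.
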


The unperturbedness is shown by the method of $2$-fold branched covering and Gordon's Conjecture.
Can we prove it directly without taking a $2$-fold branched covering?
This raises the following conjecture.

\begin{conjecture}[A bridge version of Gordon's Conjecture]\label{conj1}
The connected sum of two unperturbed bridge positions is unperturbed.
\end{conjecture}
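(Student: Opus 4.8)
The plan is to argue by contradiction along the summing sphere, carrying out in the bridge category the strategy that underlies the Heegaard version of Gordon's Conjecture. Write the connected sum as $(K,S)=(K_1\#K_2,\,S_1\#S_2)$, realized by a decomposing sphere $P$ that meets $K$ transversely in two points and meets the bridge sphere $S$ in a single circle $c$; thus $P$ separates $S^3$ into balls $B_1,B_2$, and each $(K_i,S_i)$ is recovered from $(K\cap B_i,\,S\cap B_i)$ by capping off along a subdisk of $P$. Suppose, for contradiction, that $(K,S)$ is perturbed, witnessed by a cancelling pair of bridge disks $(D_u,D_l)$ lying on opposite sides of $S$. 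First I would put $P$ in general position with respect to $D_u\cup D_l$ and minimize $|P\cap(D_u\cup D_l)|$ over all isotopies of $P$ that preserve the connected-sum structure, i.e.\ that keep $P\cap K$ two points and $P\cap S=c$ a single circle. Each circle of $P\cap(D_u\cup D_l)$ is removed by the standard innermost-disk exchange, using that $P$ is a sphere and that the interiors of the bridge disks are disjoint from $K$. The objective of the remaining steps is to eliminate the intersection arcs as well, so that $D_u\cup D_l$ becomes disjoint from $P$ and therefore lies in a single ball $B_i$; the pair then descends to a cancelling pair of bridge disks for $(K_i,S_i)$, contradicting the unperturbedness of that summand.

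The intersection arcs of $P\cap(D_u\cup D_l)$ split into two types according to their endpoints: those with endpoints on the $\beta$-arcs of the bridge disks (hence on $c\subset S$) and those with an endpoint on an $\alpha$-arc (hence at one of the two points of $P\cap K$). An outermost arc of the first type cuts a subdisk off a bridge disk which can be used to isotope $P$ across $S$, strictly reducing the intersection, and iterating disposes of all such arcs. The main obstacle is the arcs that terminate on $\alpha_u$ or $\alpha_l$, at the points where $K$ punctures $P$: these are pinned to the knot and cannot be cleared by a free isotopy of $P$, since any admissible move must keep $K\cap P$ equal to two points and $S\cap P$ a single circle. Establishing that the cancelling pair can nevertheless be pushed entirely off the summing sphere --- equivalently, that a perturbation of a connected sum is always supported in a single summand --- is precisely the heart of the conjecture. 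This is the analogue of the difficulty that made the Heegaard version hard, resolved there only through generalized Heegaard splittings and thin-position / topological-index machinery. I expect the honest completion of this step to require a comparable calculus for bridge splittings --- a notion of thin bridge position together with an amalgamation/untelescoping argument --- strong enough to control how cancelling disks can meet the summing sphere.

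As a secondary route, if one reintroduces the $2$-fold branched cover the conjecture reduces to an equivariant form of the (now established) Heegaard version. A cancelling pair of bridge disks for $(K,S)$ lifts to a stabilizing pair for the Heegaard surface $\widetilde S=\widetilde{S_1}\#\widetilde{S_2}$ of $\Sigma(K)=\Sigma(K_1)\#\Sigma(K_2)$, and Gordon's Conjecture then localizes this stabilization to one factor $\widetilde{S_i}$. The genuinely hard point is the converse passage, descending a stabilization of $\widetilde{S_i}$ to a perturbation of $(K_i,S_i)$: this requires the stabilization to be realizable equivariantly with respect to the covering involution, whereas a general stabilization need not be. That gap is exactly what obstructs a purely branched-cover proof, and is the reason the direct argument outlined above --- however delicate its arc-removal step --- is the one ultimately sought.
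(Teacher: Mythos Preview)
The statement you are attempting to prove is presented in the paper as an open \emph{conjecture} (Conjecture~\ref{conj1}); the paper does not supply a proof, and Section~\ref{sec6} is devoted precisely to explaining why the obvious cut-and-paste attack stalls. So there is no ``paper's own proof'' to compare against, and your proposal should be read as a strategy outline rather than a proof.

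That said, your outline is honest about where it breaks. You correctly observe that innermost-circle and outermost-arc moves handle the intersections of $P$ with the cancelling disks that lie away from $K$, and you correctly isolate the essential obstruction: arcs of $P\cap(D_u\cup D_l)$ pinned to the two points of $K\cap P$ cannot be removed by any isotopy of $P$ preserving the connected-sum structure. This is exactly the difficulty the paper flags in Section~\ref{sec6} (and Figure~\ref{fig3}): after restricting a cancelling pair to one summand, the resulting subdisks may have boundary arcs meeting in \emph{two} points rather than one, so they do not directly constitute a cancelling pair for $(K_i,S_i)$. Your proposal does not resolve this; you say as much when you write that an ``honest completion'' would require a thin-position/amalgamation calculus for bridge splittings analogous to what was needed for the Heegaard version. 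That is a reasonable expectation, but it is not a proof.

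Your secondary route via the $2$-fold branched cover is also correctly diagnosed as incomplete. Lifting a cancelling pair gives a stabilization upstairs, and Gordon's Conjecture localizes it to a summand $\widetilde{S_i}$; but descending requires that stabilization to be equivariant, and as the paper's Section~\ref{sec4} shows (the $(4,5)$-torus knot example), an unperturbed bridge position can have a stabilized branched double cover. So the implication ``$\widetilde{S_i}$ stabilized $\Rightarrow$ $(K_i,S_i)$ perturbed'' genuinely fails in general, and the branched-cover route cannot close the argument without additional input.

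In short: your write-up is a fair survey of the two natural approaches and their respective gaps, aligned with the paper's own discussion, but it is not a proof of the conjecture, nor does the paper claim one.
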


The presented examples of knots for Theorem \ref{thm1} are composite knots.
We have the following question.

\begin{question}\label{question1}
Does there exist a prime knot admitting an unperturbed weakly reducible non-minimal bridge position?
\end{question}

\section{Bridge positions}\label{sec2}

Let $B$ be a $3$-ball.
A {\em trivial tangle} is a collection of disjoint properly embedded arcs $b_1, \ldots, b_n$ in $B$ such that
each $b_i$ cobounds a disk $D_i$ with an arc in $\partial B$ and $D_i \cap b_j = \emptyset$ for all $j \ne i$.
Suppose that a $2$-sphere $S$ decomposes $S^3$ into two $3$-balls $B$ and $C$.
Let $K$ be a knot.
If $B \cap K$ and $C \cap K$ are trivial tangles, each consisting of $n$ arcs,
then we say that $K$ is in {\em $n$-bridge position} with respect to $S$.
Each arc of the trivial tangles $B \cap K$ and $C \cap K$ is called a {\em bridge}.
A bridge $b_i$ of the trivial tangle, say $B \cap K = \{ b_1, \ldots, b_n \}$,
cobounds a {\em bridge disk} $D_i$ with an arc in $S$ such that $D_i \cap b_j = \emptyset$ for all $j \ne i$ by definition.
By standard cut-and-paste argument, $D_i$'s ($i=1, \ldots, n$) can be taken to be pairwise disjoint.
A collection $\{ D_1,\ldots, D_n \}$ of $n$ disjoint bridge disks is called a {\em complete bridge disk system}.
If $K$ is in bridge position,
we have a decomposition of the pair $(S^3, K)$ into $(B, B \cap K)$ and $(C, C \cap K)$.
But when it is clear from the context, we will simply use the notation $B \cup_S C$ to indicate the bridge position.

For an $n$-bridge position $B \cup_S C$, we can perturb a small neighborhood of a point $p$ of $K \cap S$
so that it becomes an $(n+1)$-bridge position having bridge disks $D \subset B$ and $E \subset C$ with $D \cap E = p$.
Such an operation is called a {\em perturbation}, and
a bridge position isotopic to one obtained by a perturbation is said to be {\em perturbed}.
Each of $D$ and $E$ is a {\em cancelling disk} and $(D, E)$ is a {\em cancelling pair}.
Conversely, a perturbation can be reversed to give a lower index bridge position.
A bridge position is {\em unperturbed} if it is not perturbed.

A disk $D$ properly embedded in $B$ or $C$ with $D \cap K = \emptyset$ is a {\em compressing disk} if
$\partial D$ does not bound a disk in $S - K$.
A bridge position $B \cup_S C$ is {\em weakly reducible} if
there exist compressing disks $D \subset B$ and $E \subset C$ such that $D \cap E = \emptyset$.
Otherwise, it is {\em strongly irreducible}.
It is easy to see that if an $n$-bridge position ($n \ge 3$) is perturbed, then it is weakly reducible.
Note that a $2$-bridge position of the unknot is perturbed and strongly irreducible.

The {\em bridge number} $b(K)$ of a knot $K$ is the minimum of
$\{ n \,|\, K {\textrm{ admits an }} n{\textrm{-bridge position}} \}$.
For a connected sum $K_1 \# K_2$ of two knots $K_1$ and $K_2$,
$b(K_1 \# K_2)$ is $b(K_1) + b(K_2) -1$ \cite{Schubert}, \cite{Schultens1}.
For a $(p,q)$-torus knot $K_{p,q}$, $b(K_{p,q}) = \min \{ |p|, |q| \}$ \cite{Schubert}, \cite{Schultens2}.

\section{Heegaard splittings}\label{sec3}

In this section, we briefly review basic notions and facts about Heegaard splittings.
Connections between Heegaard splittings and bridge positions, via $2$-fold branched coverings,
will be discussed in the subsequent sections.

For a closed $3$-manifold $M$, a {\em Heegaard splitting} $V \cup_F W$ is
a decomposition of $M$ into two handlebodies $V$ and $W$ of the same genus.
The common boundary $F$ of $V$ and $W$ is called the {\em Heegaard surface} of $V \cup_F W$.
A Heegaard splitting $V \cup_F W$ is {\em stabilized} if
there exist disks $D \subset V$ and $E \subset W$ such that $| D \cap E | = 1$, and
$(D, E)$ is called a {\em cancelling pair}.
Otherwise, it is {\em unstabilized}.
If there exist compressing disks $D \subset V$ and $E \subset W$ such that
$\partial D = \partial E$ ($D \cap E = \emptyset$ respectively),
then the Heegaard splitting is said to be {\em reducible} ({\em weakly reducible} respectively).
A Heegaard splitting is {\em irreducible} ({\em strongly irreducible} respectively) if
it is not reducible (weakly reducible respectively).
It is immediate that a reducible Heegaard splitting is weakly reducible,
by slightly pushing one of $D$ and $E$ with $\partial D = \partial E$ to be apart from the other.

Suppose that $V \cup_F W$ is stabilized with a cancelling pair $(D, E)$ and the genus of $F$ is at least two.
Then we can see that $V \cup_F W$ is reducible, hence weakly reducible,
by band summing two copies of $D$ along $\partial E$ and band summing two copies of $E$ along $\partial D$.
As a contrapositive, we have the following.

\begin{proposition}\label{prop1}
If a Heegaard splitting of genus $g \ge 2$ is strongly irreducible, then it is unstabilized.
\end{proposition}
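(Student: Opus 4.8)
\textit{Proof proposal.}
The plan is to prove the contrapositive: I will show that a stabilized Heegaard splitting $V \cup_F W$ with genus $g \ge 2$ is reducible, hence weakly reducible, which contradicts strong irreducibility. Fix a cancelling pair $(D, E)$, so that $D \subset V$ and $E \subset W$ are disks with $|D \cap E| = 1$. Then $\partial D$ and $\partial E$ are simple closed curves on $F$ meeting transversely in a single point $p$. Let $\gamma$ be the boundary of a regular neighborhood $N(\partial D \cup \partial E)$ of the wedge $\partial D \cup \partial E$ in $F$. Because $\partial D$ and $\partial E$ intersect exactly once, this neighborhood is a once-punctured torus, so $\gamma$ is a single simple closed curve.

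Next I would produce a compressing disk on each side of $F$ bounded by $\gamma$. On the $V$-side, take two parallel copies $D_1, D_2$ of $D$ and band-sum them along a band that is a regular neighborhood of the sub-arc of $\partial E$ lying outside $N(\partial D)$; after pushing the band slightly into the interior of $V$, this yields a properly embedded disk $D' \subset V$ whose boundary is isotopic on $F$ to $\gamma$. Symmetrically, band-summing two parallel copies of $E$ along the analogous sub-arc of $\partial D$ gives a properly embedded disk $E' \subset W$ with $\partial E'$ isotopic to $\gamma$. After an ambient isotopy of $F$ we may arrange $\partial D' = \partial E' = \gamma$.

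The key remaining point — and the only place where the hypothesis $g \ge 2$ is used — is that $\gamma$ does not bound a disk in $F$. This is immediate: $\gamma$ bounds the once-punctured torus $N(\partial D \cup \partial E)$ on one side, so if it also bounded a disk in $F$, then $F$ would have genus $1$. Hence $D'$ and $E'$ are genuine compressing disks with $\partial D' = \partial E'$, so $V \cup_F W$ is reducible, and therefore weakly reducible, contrary to hypothesis.

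I expect the routine care to lie in checking that the band-summed curves are indeed isotopic to $\gamma$ and that $D'$ and $E'$ are properly embedded and not boundary-parallel; the conceptual crux is the essentiality of $\gamma$, which is short once the genus hypothesis is in force. Alternatively, one can package the same idea by noting that the boundary $\Sigma$ of a regular neighborhood of $D \cup E$ in $M$ is a sphere meeting $F$ exactly in $\gamma$, so $\Sigma$ is a reducing sphere when $g \ge 2$; I would likely present whichever of the two formulations is shorter to write out rigorously.
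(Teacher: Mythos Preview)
Your argument is correct and is essentially identical to the paper's: the paper proves the contrapositive by band-summing two copies of $D$ along $\partial E$ and two copies of $E$ along $\partial D$ to exhibit a reducing pair, exactly as you do. You simply make explicit what the paper leaves implicit---namely that the common boundary curve is $\partial N(\partial D \cup \partial E)$ and that $g \ge 2$ is what guarantees it is essential---and your alternative formulation via the boundary sphere of a neighborhood of $D \cup E$ is a standard repackaging of the same construction.
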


\section{$2$-Fold branched coverings}\label{sec4}

Let $B \cup_S C$ be an $n$-bridge position of a knot $K$.
Let $\{ D_1, \ldots, D_n \}$ be a complete bridge disk system for $B \cap K$.
Cut $B$ along $\bigcup^n_{i=1} D_i$.
Let $B'$ be the resulting $3$-ball and let $D'_{i,+}$ and $D'_{i,-}$ denote the two scars of $D_i$ on $\partial B'$.
Let $B''$ be a copy of $B'$ and similarly let $D''_{i,+}$ and $D''_{i,-}$ denote the two scars of $D_i$ on $\partial B''$.
Glue $B'$ and $B''$ along $D'_{i,\pm}$ and $D''_{i,\mp}$ for each $i$.
The resulting manifold is a genus $n-1$ handlebody $V$.
There is an involution of $V$ fixing $B \cap K$ such that
the quotient map induced by the involution is a $2$-fold covering $p_1: V \to B$ branched along $B \cap K$.
Similarly, we can take a $2$-fold covering $p_2: W \to C$ branched along $C \cap K$, where $W$ is a genus $n-1$ handlebody.
Hence we have a $2$-fold branched covering map $p$ from a genus $n-1$ Heegaard splitting $V \cup_F W$ to $B \cup_S C$,
branched along the knot $K$ in $n$-bridge position.

Suppose $B \cup_S C$ is perturbed, so it admits a cancelling pair $(D, E)$.
The preimages $p^{-1}(D)$ and $p^{-1}(E)$ are disks in $V$ and $W$ respectively that intersect at one point,
so $V \cup_F W$ is stabilized.
As a contrapositive, we have the following.

\begin{proposition}\label{prop2}
Suppose that $p: V \cup_F W \to B \cup_S C$ is
a $2$-fold covering branched along a knot $K$ in bridge position with respect to $S$.
If $V \cup_F W$ is unstabilized, then $B \cup_S C$ is unperturbed.
\end{proposition}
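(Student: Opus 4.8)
The plan is to prove the contrapositive: assuming $B \cup_S C$ is perturbed, I will exhibit a cancelling pair for $V \cup_F W$, thereby showing it is stabilized. So suppose $B \cup_S C$ is perturbed. By definition there is a cancelling pair $(D, E)$ with $D \subset B$, $E \subset C$, $D \cap E = p$ a single point on $K \cap S$, and each of $D$, $E$ a bridge disk for one of the perturbed bridges. The key observation is that $D$ and $E$ are disks that meet $K$ only along an arc of $\partial D$ (respectively $\partial E$), since a bridge disk is cobounded by a bridge and an arc in $S$. I will first arrange, up to isotopy, that the branch covering data is compatible with $D$ and $E$: the complete bridge disk systems for $B \cap K$ and $C \cap K$ used to build $V$ and $W$ can be chosen to contain $D$ and $E$ among their members, using the standard cut-and-paste argument recalled in Section 2 that makes bridge disks pairwise disjoint.

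Next I would analyze the preimage $p_1^{-1}(D)$. Since $D$ is a bridge disk, its boundary consists of a subarc of the bridge $b$ (which is branch locus, hence fixed by the involution) together with an arc $\alpha \subset S$ disjoint from $K$. Under the $2$-fold branched cover $p_1 : V \to B$, a disk meeting the branch set $B \cap K$ exactly in a boundary arc lifts to a single disk $\widetilde{D}$ in $V$: the involution of $V$ restricts to $\widetilde D$ as a reflection fixing the arc of $\widetilde D$ lying over $b$, so $p_1^{-1}(D) = \widetilde D$ is connected and is a properly embedded disk in $V$. I would carry out the symmetric analysis for $E$, obtaining a disk $\widetilde E = p_2^{-1}(E) \subset W$. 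Then I compute $\widetilde D \cap \widetilde E = p^{-1}(D \cap E) = p^{-1}(p)$; since $p = D \cap E \in K = $ branch locus, its preimage in $F = V \cap W$ is a single point. Hence $|\widetilde D \cap \widetilde E| = 1$, so $(\widetilde D, \widetilde E)$ is a cancelling pair and $V \cup_F W$ is stabilized. This is the contrapositive of the desired implication, completing the proof.

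The main obstacle I anticipate is justifying rigorously that a bridge disk lifts to a \emph{single} disk rather than two disjoint disks (the latter would happen for a disk meeting the branch locus only in its interior, or not at all). This hinges on the branch arc being contained in $\partial D$ and on the explicit description of the involution coming from the cut-and-reglue construction of $V$ in Section 4: the scars $D'_{i,\pm}$ are glued to $D''_{i,\mp}$, so the two copies $B', B''$ are genuinely swapped by the involution along each bridge-disk scar. I would want to check that, after isotoping $D$ to be one of the $D_i$'s, the disk $D$ appears in $B$ as the image of a single disk in $B'$ (say the scar disk itself), whose preimage in $V$ is the union of the two scar copies $D'_{i,+} \cup D''_{i,-}$ meeting along their common branch arc --- indeed a single disk. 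The rest of the argument is the routine verification that preimages of disjoint objects are disjoint and that intersection points over the branch locus are single points, which I would state without belaboring.
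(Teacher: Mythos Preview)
Your proposal is correct and follows exactly the paper's approach: prove the contrapositive by lifting a cancelling pair $(D,E)$ for the bridge position to a cancelling pair $(p^{-1}(D),p^{-1}(E))$ for the Heegaard splitting. You supply considerably more detail than the paper's two-sentence argument---in particular the justification that a bridge disk lifts to a single disk and that the intersection point, lying on the branch locus, has a single preimage---but the underlying idea is identical.
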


The converse of Proposition \ref{prop2} does not hold.
There is a relevant discussion in \cite[Section $1$]{Howards-Schultens}.
Let $K_{p,q}$ be a $(p, q)$-torus knot with $0 < p < q$.
A $p$-bridge position $B \cup_S C$ of $K_{p,q}$ is unperturbed since $b(K_{p,q}) = p$.

A $2$-fold covering of $S^3$ branched along $K_{p,q}$ is a small Seifert fibered manifold $M$.
It is known that an irreducible Heegaard splitting of a Seifert fibered manifold
is either vertical or horizontal \cite{Moriah-Schultens}.
The genus of a vertical splitting of $M$ is at most two.
The genus of a horizontal splitting is always an even number.
Refer to \cite{Moriah-Schultens} for more details.

The $2$-fold branched covering $V \cup_F W$ of $B \cup_S C$ is of genus $p - 1$.
So for example, if $(p,q)=(4,5)$, then $V \cup_F W$ is a reducible Heegaard splitting of $M$.
Since $M$ is an irreducible manifold, $V \cup_F W$ is stabilized.
Therefore, if $(p,q)=(4,5)$, then $B \cup_S C$ is unperturbed and $V \cup_F W$ is stabilized.

\section{Connected sums}\label{sec5}

Let $B_1 \cup_{S_1} C_1$ and $B_2 \cup_{S_2} C_2$ be bridge positions of knots $K_1$ and $K_2$ respectively.
Let $p_i: V_i \cup_{F_i} W_i \to B_i \cup_{S_i} C_i$ ($i=1,2$) be $2$-fold branched coverings explained in Section \ref{sec4}.
See Figure \ref{fig1}.
The connected sum of $B_1 \cup_{S_1} C_1$ and $B_2 \cup_{S_2} C_2$ is defined as follows.
Take a small open ball neighborhood $N_i$ at a point of $K_i \cap S_i$.
Glue $B_1 - N_1$ and $B_2 - N_2$ along $B_1 \cap \partial N_1$ and $B_2 \cap \partial N_2$ so that
$K_1 \cap (B_1 \cap \partial N_1)$ is identified with $K_2 \cap (B_2 \cap \partial N_2)$.
Similarly, glue $C_1 - N_1$ and $C_2 - N_2$ along $C_1 \cap \partial N_1$ and $C_2 \cap \partial N_2$ so that
$K_1 \cap (C_1 \cap \partial N_1)$ is identified with $K_2 \cap (C_2 \cap \partial N_2)$.
The result is a bridge position $(B_1 \natural B_2) \cup_{S_1 \# S_2} (C_1 \natural C_2)$ of $K_1 \# K_2$.

\begin{enumerate}
\item[(1)] $(B_1 \cup_{S_1} C_1) \# (B_2 \cup_{S_2} C_2) = (B_1 \natural B_2) \cup_{S_1 \# S_2} (C_1 \natural C_2).$
\end{enumerate}

Now we consider the connected sum of $M_1 = V_1 \cup_{F_1} W_1$ and $M_2 = V_2 \cup_{F_2} W_2$.
Since we want the connected sum to be compatible with the branched covering map,
take $p_1^{-1}(N_1)$ and $p_2^{-1}(N_2)$, which are open $3$-balls.
Glue $V_1 - p_1^{-1}(N_1)$ and $V_2 - p_2^{-1}(N_2)$ along
$V_1 \cap \partial (p_1^{-1}(N_1))$ and $V_2 \cap \partial (p_2^{-1}(N_2))$.
Similarly, glue $W_1 - p_1^{-1}(N_1)$ and $W_2 - p_2^{-1}(N_2)$ along
$W_1 \cap \partial (p_1^{-1}(N_1))$ and $W_2 \cap \partial (p_2^{-1}(N_2))$.
The result is a Heegaard splitting $(V_1 \natural V_2) \cup_{F_1 \# F_2} (W_1 \natural W_2)$ of $M_1 \# M_2$.

\begin{enumerate}
\item[(2)] $(V_1 \cup_{F_1} W_1) \# (V_2 \cup_{F_2} W_2) = (V_1 \natural V_2) \cup_{F_1 \# F_2} (W_1 \natural W_2).$
\end{enumerate}

\begin{figure}[!hbt]
\includegraphics[width=14cm,clip]{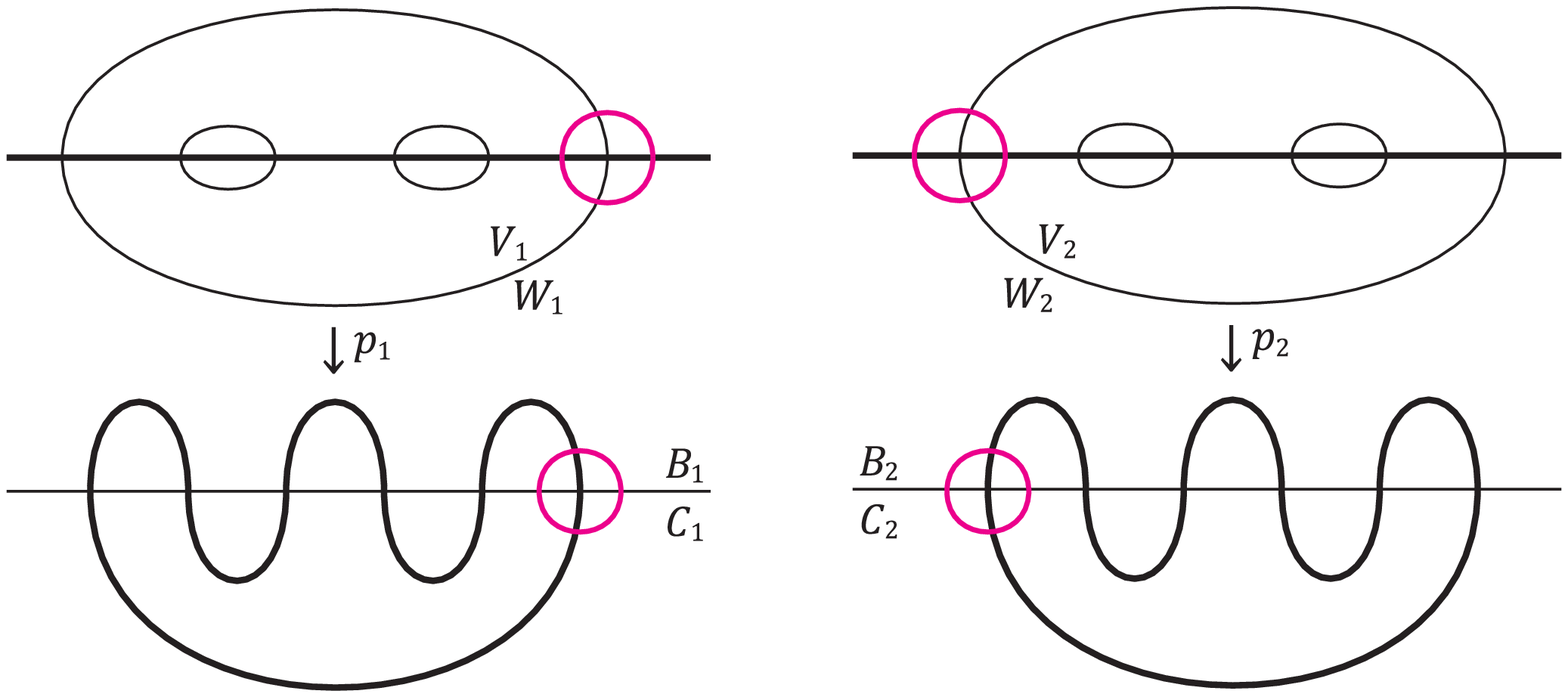}
\caption{$2$-Fold branched coverings $p_1: V_1 \cup_{F_1} W_1 \to B_1 \cup_{S_1} C_1$ and
$p_2: V_2 \cup_{F_2} W_2 \to B_2 \cup_{S_2} C_2$.}\label{fig1}
\end{figure}

\begin{figure}[!hbt]
\includegraphics[width=9cm,clip]{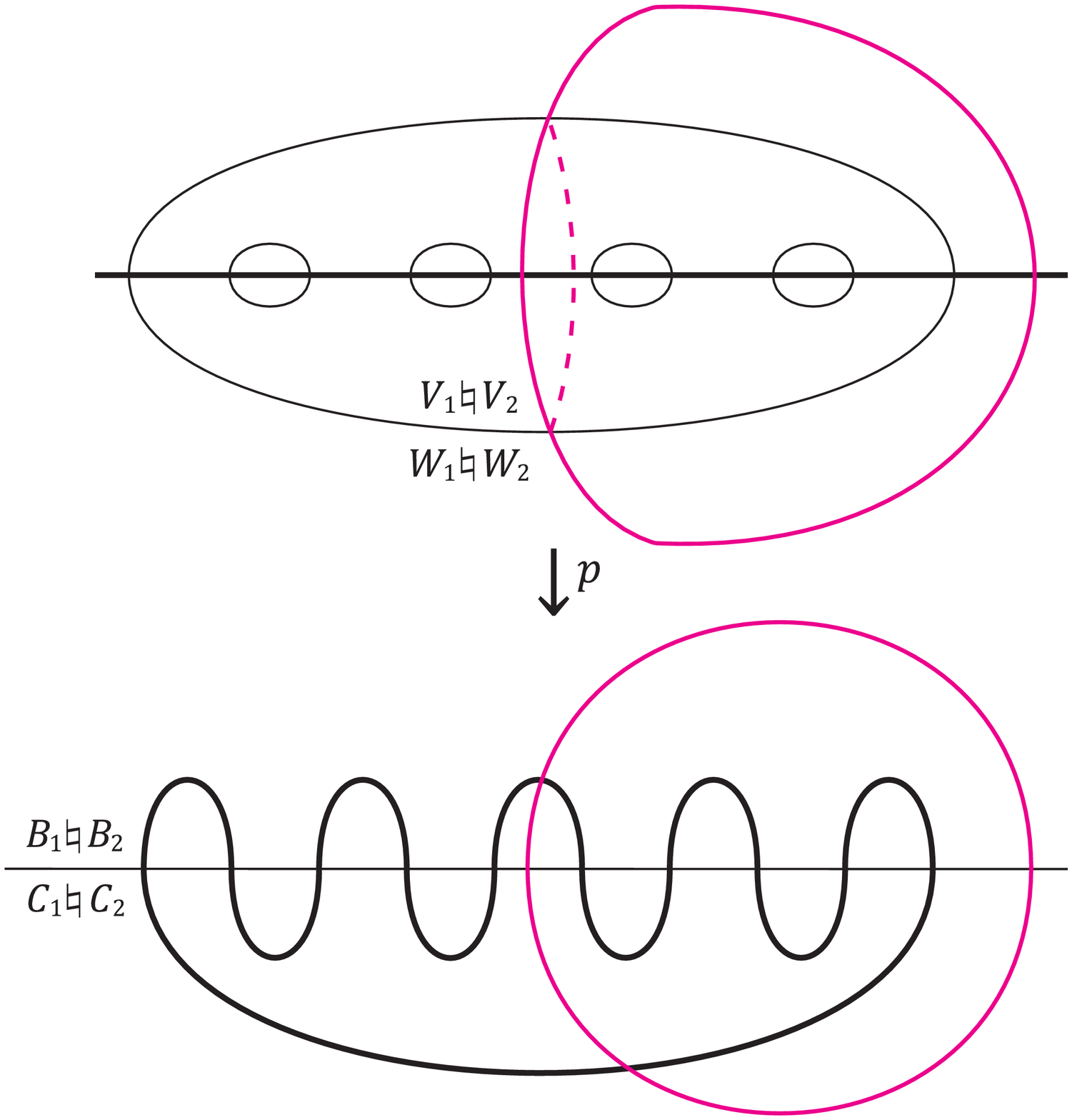}
\caption{A $2$-fold branched covering
$p : (V_1 \cup_{F_1} W_1) \# (V_2 \cup_{F_2} W_2) \to (B_1 \cup_{S_1} C_1) \# (B_2 \cup_{S_2} C_2)$.}\label{fig2}
\end{figure}

Since $V_i - p_i^{-1}(N_i)$ ($i=1,2$) $2$-fold branched covers $B_i - N_i$,
the handlebody $V_1 \natural V_2$ $2$-fold branched covers $B_1 \natural B_2$.
Similarly, since $W_i - p_i^{-1}(N_i)$ ($i=1,2$) $2$-fold branched covers $C_i - N_i$,
the handlebody $W_1 \natural W_2$ $2$-fold branched covers $C_1 \natural C_2$.
So $(V_1 \natural V_2) \cup_{F_1 \# F_2} (W_1 \natural W_2)$ $2$-fold branched covers
$(B_1 \natural B_2) \cup_{S_1 \# S_2} (C_1 \natural C_2)$.
See Figure \ref{fig2}.
Then by $(1)$ and $(2)$, we have the following lemma.

\begin{lemma}\label{lem1}
There is a $2$-fold branched covering
$p : (V_1 \cup_{F_1} W_1) \# (V_2 \cup_{F_2} W_2) \to (B_1 \cup_{S_1} C_1) \# (B_2 \cup_{S_2} C_2)$ .
\end{lemma}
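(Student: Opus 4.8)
The plan is to assemble Lemma \ref{lem1} directly from the constructions already laid out, treating the two numbered identities $(1)$ and $(2)$ as the definitions of the connected sums and the paragraph preceding the lemma as the verification that the branched covering is compatible with those identities. So the proof is essentially a bookkeeping argument: I would first observe that by identity $(1)$ the target $(B_1 \cup_{S_1} C_1) \# (B_2 \cup_{S_2} C_2)$ is by definition $(B_1 \natural B_2) \cup_{S_1 \# S_2} (C_1 \natural C_2)$, and by identity $(2)$ the source $(V_1 \cup_{F_1} W_1) \# (V_2 \cup_{F_2} W_2)$ is by definition $(V_1 \natural V_2) \cup_{F_1 \# F_2} (W_1 \natural W_2)$. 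Thus it suffices to exhibit a $2$-fold branched covering between these boundary-connected-sum expressions.

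Next I would produce that covering by gluing the two given branched coverings. The key point is that $N_i$ is chosen at a point of $K_i \cap S_i$ and $p_i^{-1}(N_i)$ is an open $3$-ball; restricting $p_i$ gives a $2$-fold covering $V_i - p_i^{-1}(N_i) \to B_i - N_i$ branched along the arc $(B_i \cap K_i) - N_i$, and likewise $W_i - p_i^{-1}(N_i) \to C_i - N_i$. The deleted balls $p_i^{-1}(N_i)$ are carried to $N_i$ equivariantly, so on the sphere boundaries $\partial(p_i^{-1}(N_i))$ the two covering involutions agree with the (unique up to isotopy) branched double cover of $(\partial N_i, \partial N_i \cap K_i) \cong (S^2, \text{two points})$. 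Hence, after the identifications used to form $V_1 \natural V_2$ and $B_1 \natural B_2$ (and similarly on the $W$/$C$ side), the maps $p_1$ and $p_2$ fit together to a single $2$-fold branched covering $p: V_1 \natural V_2 \to B_1 \natural B_2$ and $p: W_1 \natural W_2 \to C_1 \natural C_2$. Since these agree on $F_1 \# F_2 \to S_1 \# S_2$ they assemble to a branched covering of the Heegaard splitting onto the bridge position, which is exactly the map claimed once identities $(1)$ and $(2)$ are invoked. I would point the reader to Figure \ref{fig2} for the picture.

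The only genuine content to be careful about — and the step I expect to take the most care — is checking that the gluing maps are compatible, i.e.\ that the boundary identifications used on the base ($B_i \cap \partial N_i$ glued matching $K_1$ with $K_2$) lift to the chosen boundary identifications on the covers ($V_i \cap \partial(p_i^{-1}(N_i))$), and symmetrically for $C$ and $W$. This amounts to the fact that the double branched cover of $(S^2, \text{two points})$ is an annulus with a standard involution and any two such are equivariantly homeomorphic rel the obvious data, so the lifts of the gluing homeomorphism on $\partial N_i$ exist and can be chosen to match; I would phrase this as a short remark rather than a computation. Everything else — that $V_1 \natural V_2$ and $W_1 \natural W_2$ are handlebodies, that their union along $F_1 \# F_2$ is a Heegaard splitting of $M_1 \# M_2$, and that the result of the base construction is a bridge position of $K_1 \# K_2$ — has already been asserted in the text preceding the lemma, so I would simply cite it. This yields Lemma \ref{lem1}.
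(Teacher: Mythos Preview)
Your proposal is correct and follows exactly the paper's approach: the paper proves the lemma in the paragraph immediately preceding it by noting that each $V_i - p_i^{-1}(N_i)$ branched covers $B_i - N_i$ (and similarly on the $W$/$C$ side), so the glued pieces form a branched cover $(V_1 \natural V_2) \cup_{F_1 \# F_2} (W_1 \natural W_2) \to (B_1 \natural B_2) \cup_{S_1 \# S_2} (C_1 \natural C_2)$, and then invokes identities $(1)$ and $(2)$. One small slip to fix: the double branched cover of $(S^2,\text{two points})$ is $S^2$, not an annulus (think of $z \mapsto z^2$ on the Riemann sphere); your compatibility remark still holds with this correction.
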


In other words, by carefully choosing the $3$-balls,
a connected sum of $2$-fold branched coverings is a $2$-fold branched covering of a connected sum.

\section{A bridge version of Gordon's Conjecture}\label{sec6}

\begin{conjecture}[Gordon's Conjecture]\label{conj2}
The connected sum of two unstabilized Heegaard splittings is unstabilized.
\end{conjecture}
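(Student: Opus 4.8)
This was proved by Bachman, and independently by Qiu and Scharlemann; here is the shape of the argument one would carry out. The plan is to establish the harder contrapositive: if $M = M_1 \# M_2$ and the induced genus $g_1+g_2$ splitting $H = H_1 \# H_2$ is stabilized, then at least one of the summands $H_1 \subset M_1$, $H_2 \subset M_2$ is already stabilized. Fix the summing sphere $\Sigma \subset M$; away from the degenerate cases in which some $M_i$ is $S^3$ — which are immediate, since every positive-genus Heegaard splitting of $S^3$ is stabilized — the sphere $\Sigma$ is essential. The two geometric inputs are the sweep-out of $M$ whose generic level surface is $H$ and the fixed incompressible surface $\Sigma$.

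First I would put $\Sigma$ in general position with respect to the $H$-sweep-out and form the Rubinstein--Scharlemann graphic: view $\Sigma$ as a level set of an auxiliary Morse function on $M$ whose two sides are the summands $M_1$ and $M_2$, each with an open ball removed, and record the graphic of the two functions in the unit square. Cerf-theoretic genericity makes the graphic finite, and its complementary regions are labelled by the isotopy type and compressibility of the intersection $H_t \cap \Sigma$.

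Next I would use the hypothesis. Since $g_1+g_2 \ge 2$, a cancelling pair $(D,E)$ makes $H$ reducible, as noted just before Proposition~\ref{prop1}; combined with the unstabilizedness of each $H_i$, this forces $H$ to be \emph{critical} — topologically minimal of index $2$ in Bachman's sense. Applying the structure theorem for topologically minimal surfaces against the incompressible $\Sigma$, one isotopes $H$ so that $H \cap \Sigma$ has no removable components and so that the topological indices of the two pieces $H \cap M_1$ and $H \cap M_2$ — which are punctured copies of $H_1$ and $H_2$ — sum to at most $2$. Tracking how this budget of $2$ is distributed, together with an innermost-disk and outermost-arc reduction of $D \cap \Sigma$ and $E \cap \Sigma$ that localizes a cancelling pair to one side of $\Sigma$, should yield a cancelling pair for $H_1$ or for $H_2$, the desired contradiction.

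The main obstacle is precisely this last extraction. One must rule out the configurations in which the destabilizing disks are forced to straddle $\Sigma$, showing that in every such case one can either strictly decrease $|D \cap \Sigma| + |E \cap \Sigma|$ or produce a reducing sphere for $H$ that meets $\Sigma$ in a single loop — which then descends to a lower-genus destabilization of one summand. Keeping the delicate numerical condition $|D \cap E| = 1$ intact throughout these isotopies is the crux, and is where the critical-surface machinery, or alternatively the thin-position bookkeeping of Qiu and Scharlemann, carries the real weight; the low-genus summand cases, in which ``stabilized'' need not imply ``reducible'', should be disposed of by direct inspection.
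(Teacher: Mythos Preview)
The paper does not prove Gordon's Conjecture: immediately after stating it, the author simply records that it was established by Bachman and independently by Qiu--Scharlemann, and then invokes it as a black box in Section~\ref{sec7}. There is therefore no in-paper argument against which to compare your sketch.

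That said, your outline has a genuine gap at its pivot. You assert that if $H = H_1 \# H_2$ is stabilized while each $H_i$ is unstabilized, then ``this forces $H$ to be critical --- topologically minimal of index~$2$.'' This inference is unjustified and, in fact, goes the wrong way: Bachman's structure theorem for a topologically minimal surface against an essential sphere requires as \emph{input} that the surface be topologically minimal, whereas a stabilized Heegaard surface is not --- the cancelling pair makes the disk complex highly connected rather than carrying a nontrivial homotopy class. In Bachman's actual proof the critical-surface machinery is applied not to the stabilized $H$ but to surfaces arising from untelescoping and to the summands; the argument is considerably more indirect than your sketch suggests. The Qiu--Scharlemann proof, whose innermost-disk and complexity-reduction language your last two paragraphs more closely echo, does not pass through topological minimality at all but through an elaborate combinatorial analysis of labelled disk systems relative to the summing sphere. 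Your proposed endgame --- reduce $|D\cap\Sigma|+|E\cap\Sigma|$ until a cancelling pair sits on one side of $\Sigma$ --- is exactly the naive plan that fails without substantial new ideas, and your sketch does not supply the mechanism that makes it work.
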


Gordon's Conjecture is proved by Bachman \cite{Bachman} and independently by Qiu and Scharlemann \cite{Qiu-Scharlemann}.
Bachman used the notion of critical surface.
The proof in \cite{Qiu-Scharlemann} is a constructive combinatorial proof.

We proposed a bridge version of Gordon's Conjecture in the introduction.
Compared to the case of Heegaard splittings, a difficulty that may arise by the presence of a knot is the following.
Suppose that a connected sum of two bridge positions is perturbed.
When we obtain, from a cancelling pair for the connected sum, subdisks in a summand,
there can be two subarcs that intersect at {\em two} points.
See Figure \ref{fig3}.
In Figure \ref{fig3}, $S$ is a bridge sphere and $P$ is a decomposing sphere for the connected sum.
On the other hand, it might be a helpful fact that every properly embedded disk in a $3$-ball is separating.

\begin{figure}[!hbt]
\includegraphics[width=14cm,clip]{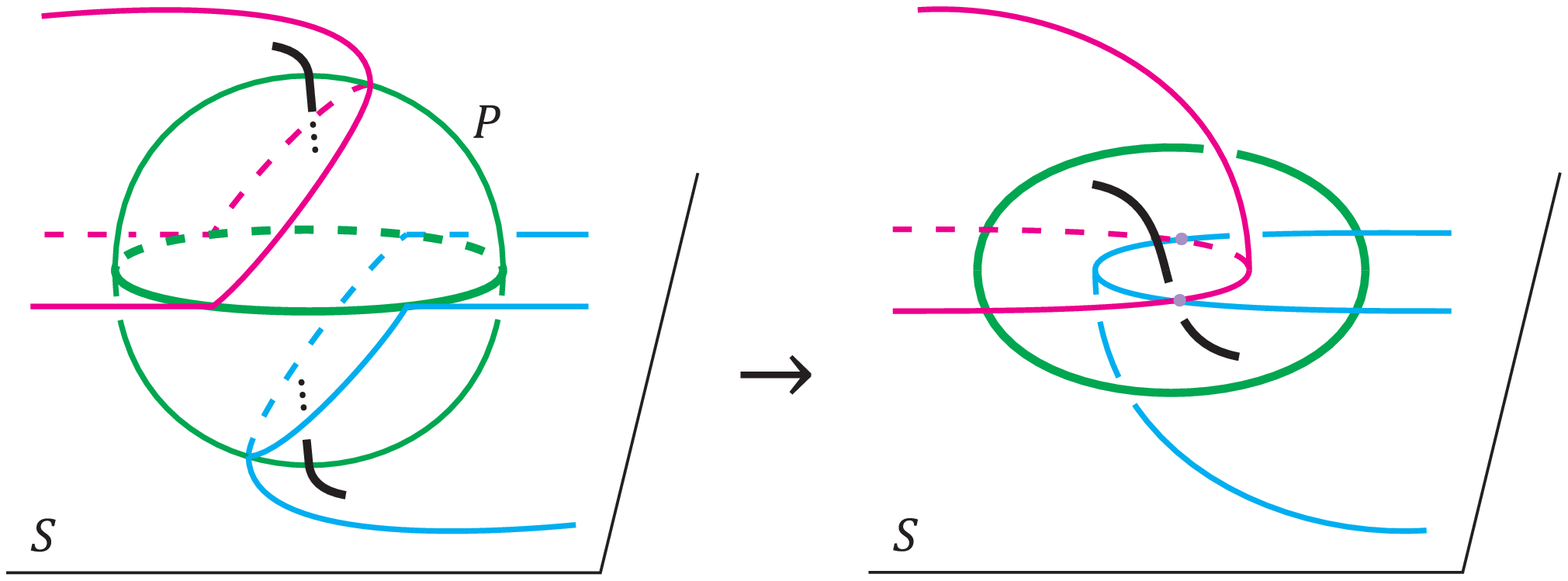}
\caption{From a cancelling pair for a connected sum, to subdisks in a summand.}\label{fig3}
\end{figure}

\section{Proof of Theorem \ref{thm1}}\label{sec7}

Let $K_1$ be a knot admitting an $n_1$-bridge position $B_1 \cup_{S_1} C_1$ with $n_1 > b(K_1)$
whose $2$-fold branched covering $V_1 \cup_{F_1} W_1$ is an unstabilized Heegaard splitting.

\begin{claim}\label{claim1}
There are infinitely many examples for $K_1$.
\end{claim}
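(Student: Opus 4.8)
The plan is to build all the required examples out of a single knot $K^\star$ furnished by Jang--Kobayashi--Ozawa--Takao in \cite{JKOT} (see also \cite{Ozawa-Takao}): one carrying an unperturbed strongly irreducible non-minimal $n^\star$-bridge position $B^\star \cup_{S^\star} C^\star$, together with its iterated self-connected sums. Here $n^\star > b(K^\star)$ by the meaning of non-minimal, and necessarily $n^\star \ge 3$, since a non-minimal $2$-bridge position would be a $2$-bridge position of the unknot and hence perturbed by \cite{Otal1}. The one property of $K^\star$ that does \emph{not} come for free --- and which is precisely what will be needed, via Proposition \ref{prop2}, to handle the connected sums --- is that the associated $2$-fold branched covering Heegaard splitting $V^\star \cup_{F^\star} W^\star$ of Section \ref{sec4}, of genus $n^\star - 1 \ge 2$, is \emph{unstabilized}. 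This is strictly stronger than unperturbedness of $B^\star \cup_{S^\star} C^\star$, because the converse of Proposition \ref{prop2} fails, as recorded there for the $4$-bridge position of the $(4,5)$-torus knot.

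Granting for the moment that $V^\star \cup_{F^\star} W^\star$ is unstabilized, the claim follows easily. The iterated connected sums $K^\star,\ K^\star \# K^\star,\ K^\star \# K^\star \# K^\star,\ \ldots$, each taken with the corresponding connected-sum bridge position of Section \ref{sec5}, all serve as $K_1$: by Lemma \ref{lem1} the branched double cover of the $k$-fold sum is the $k$-fold connected sum of $V^\star \cup_{F^\star} W^\star$, which is unstabilized by Gordon's Conjecture (Conjecture \ref{conj2}, a theorem of \cite{Bachman} and of \cite{Qiu-Scharlemann}); its bridge count is $k(n^\star - 1) + 1$, which exceeds $k(b(K^\star) - 1) + 1 = b(K^\star \# \cdots \# K^\star)$ by Schubert's additivity, so the position is non-minimal; and the knots in this list are pairwise distinct, say by their Seifert genus. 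So everything reduces to showing that $V^\star \cup_{F^\star} W^\star$ is unstabilized, and by Proposition \ref{prop1} --- the genus being at least two --- it suffices to prove it is strongly irreducible. I would attempt this in one of two ways: either by extracting it from the explicit description in \cite{JKOT}, exhibiting $V^\star \cup_{F^\star} W^\star$ as the lift of a Heegaard splitting already known to be strongly irreducible (for instance a horizontal splitting of a Seifert fibered piece, controlled by \cite{Moriah-Schultens}); or by arguing directly: given disjoint compressing disks $D \subset V^\star$ and $E \subset W^\star$, make each equivariant for the covering involution $\tau$ (so that it is $\tau$-invariant or disjoint from its $\tau$-image), and --- using that a properly embedded disk meets the one-dimensional fixed set of $\tau$ in at most a point --- project them to compressing disks, or disks meeting $K^\star$ once, that ought to produce a weak reduction of $B^\star \cup_{S^\star} C^\star$.

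The hard part is this last step: passing from the disjointness upstairs to a genuine weak reduction downstairs. The obstacle is exactly the phenomenon flagged in Section \ref{sec6} and Figure \ref{fig3}: a disk disjoint from $\tau(D)$ may nevertheless meet $D$, so disjoint disks upstairs can project to disks meeting in \emph{two} points, and one must show this does not prevent the weak reduction from descending. I expect this to be overcome either by pinning down the concrete branched cover manifolds of \cite{JKOT} and invoking \cite{Moriah-Schultens}, or by proving an equivariant weak-reduction lemma that tolerates the projected disks meeting $K^\star$; either route delivers the claim.
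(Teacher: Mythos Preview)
Your proposal leaves the decisive step unproved and, more importantly, inverts the logical flow of \cite{JKOT}. You take as your input datum a knot $K^\star$ with a strongly irreducible non-minimal bridge position, and then identify as the ``hard part'' the task of lifting that strong irreducibility to the Heegaard splitting $V^\star\cup_{F^\star}W^\star$ upstairs. But that is not how \cite{JKOT} works: in that paper the \emph{primary} result is that the $2$-fold branched covering Heegaard splitting itself is strongly irreducible (indeed, has Hempel distance at least $2$); the strong irreducibility and unperturbedness of the bridge position are then \emph{consequences} downstairs. So there is nothing to lift. The paper's proof of the claim is accordingly a two-line citation: \cite{JKOT} already gives infinitely many knots whose branched-cover Heegaard splittings are strongly irreducible, and Proposition~\ref{prop1} converts strong irreducibility into unstabilizedness.

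Because of this, your connected-sum mechanism (iterated self-sums, Lemma~\ref{lem1}, Gordon's Conjecture) is an unnecessary detour: once a single $K^\star$ satisfying the defining property of $K_1$ is in hand, \cite{JKOT} already supplies infinitely many such knots directly, with no need to manufacture more via sums. Moreover, your fallback plan --- to deduce strong irreducibility upstairs from strong irreducibility downstairs by making disks equivariant and projecting --- is exactly the direction in which the implication is delicate (compare the torus-knot discussion in Section~\ref{sec4}, where an unperturbed bridge position has a stabilized cover), and you yourself flag the two-point-intersection obstruction without resolving it. In short: the gap is not a missing lemma but a misreading of what \cite{JKOT} provides; once that is corrected, the claim is immediate.
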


\begin{proof}
There are infinitely many knots in \cite{JKOT}, each of which admits a $(2k+5)$-bridge position for any integer $k \ge 0$.
Let $K_1$ denote one of them, and $B_1 \cup_{S_1} C_1$ be a $(2k + 5)$-bridge position of $K_1$ with $2k + 5 > b(K_1)$.
It is shown in \cite{JKOT} that the $2$-fold branched covering $V_1 \cup_{F_1} W_1$ of $B_1 \cup_{S_1} C_1$ is strongly irreducible.
By Proposition \ref{prop1}, $V_1 \cup_{F_1} W_1$ is unstabilized.
\end{proof}

Let $K_2$ be a knot admitting an $n_2$-bridge position $B_2 \cup_{S_2} C_2$
whose $2$-fold branched covering $V_2 \cup_{F_2} W_2$ is unstabilized.
There are also infinitely many examples for $K_2$.
Then $(V_1 \cup_{F_1} W_1) \# (V_2 \cup_{F_2} W_2)$ is unstabilized by Gordon's Conjecture.
There exists a $2$-fold branched covering
$p : (V_1 \cup_{F_1} W_1) \# (V_2 \cup_{F_2} W_2) \to (B_1 \cup_{S_1} C_1) \# (B_2 \cup_{S_2} C_2)$ by Lemma \ref{lem1}.
By Proposition \ref{prop2}, $(B_1 \cup_{S_1} C_1) \# (B_2 \cup_{S_2} C_2)$ is unperturbed.
It is weakly reducible because it is obtained by a connected sum.
The bridge number $b(K_1 \# K_2)$ is $b(K_1) + b(K_2) - 1$ and
$(B_1 \cup_{S_1} C_1) \# (B_2 \cup_{S_2} C_2)$ is an $(n_1 + n_2 - 1)$-bridge position of $K_1 \# K_2$, where
$n_1 + n_2 - 1 > b(K_1) + b(K_2) - 1$.




\begin{thebibliography}{00}

\bibitem{Bachman} D. Bachman,
Connected sums of unstabilized Heegaard splittings are unstabilized,
Geom. Topol. {\bf 12} (2008), no. 4, 2327--2378.

\bibitem{Howards-Schultens} H. N. Howards and J. Schultens,
Thin position for knots and $3$-manifolds,
Topology Appl. {\bf 155} (2008), no. 13, 1371--1381.

\bibitem{JKOT} Y. Jang, T. Kobayashi, M. Ozawa, and K. Takao,
A knot with destabilized bridge spheres of arbitrarily high bridge number,
J. Lond. Math. Soc. (2) {\bf 93} (2016), no. 2, 379--396.

\bibitem{Lee} J. H. Lee,
Non-minimal bridge position of $2$-cable links,
accepted in Michigan Mathematical Journal.

\bibitem{Moriah-Schultens} Y. Moriah and J. Schultens,
Irreducible Heegaard splittings of Seifert fibered spaces are either vertical or horizontal,
Topology {\bf 37} (1998), no. 5, 1089--1112.

\bibitem{Otal1} J.-P. Otal,
Pr\'{e}sentations en ponts du n{\oe}ud trivial,
C. R. Acad. Sci. Paris S\'{e}r. I Math. {\bf 294} (1982), no. 16, 553--556.

\bibitem{Otal2} J.-P. Otal,
Pr\'{e}sentations en ponts des n{\oe}uds rationnels,
Low-dimensional topology (Chelwood Gate, 1982), 143--160, London Math. Soc. Lecture Note Ser., 95,
Cambridge Univ. Press, Cambridge, 1985.

\bibitem{Ozawa} M. Ozawa,
Nonminimal bridge positions of torus knots are stabilized,
Math. Proc. Cambridge Philos. Soc. {\bf 151} (2011), no. 2, 307--317.

\bibitem{Ozawa-Takao} M. Ozawa and K. Takao,
A locally minimal, but not globally minimal, bridge position of a knot,
Math. Proc. Cambridge Philos. Soc. {\bf 155} (2013), no. 1, 181--190.

\bibitem{Qiu-Scharlemann} R. Qiu and M. Scharlemann,
A proof of the Gordon Conjecture,
Adv. Math. {\bf 222} (2009), no. 6, 2085--2106.

\bibitem{Schubert} H. Schubert,
\"{U}ber eine numerische Knoteninvariante,
Math. Z. {\bf 61} (1954), 245--288.

\bibitem{Schultens1} J. Schultens,
Additivity of bridge numbers of knots,
Math. Proc. Cambridge Philos. Soc. {\bf 135} (2003), no. 3, 539--544.

\bibitem{Schultens2} J. Schultens,
Bridge numbers of torus knots,
Math. Proc. Cambridge Philos. Soc. {\bf 143} (2007), no. 3, 621--625.

\bibitem{Zupan} A. Zupan,
Properties of knots preserved by cabling,
Comm. Anal. Geom. {\bf 19} (2011), no. 3, 541--562.

\end{thebibliography}
\end{document}